\newtheorem{theorem}{Theorem}[section]
\newtheorem{lemma}[theorem]{Lemma}
\newtheorem{corollary}[theorem]{Corollary}
\begin{document}
Sangaku Journal of Mathematics (SJM) \copyright SJM \\
ISSN 2534-9562 \\
Volume 3 (2019), pp. 98-105  \\
Received 19 September 2019. Published on-line 28 October 2019 \\ 
web: \url{http://www.sangaku-journal.eu/} \\
\copyright The Author(s) This article is published 
with open access\footnote{This article is distributed under the terms of the Creative Commons Attribution License which permits any use, distribution, and reproduction in any medium, provided the original author(s) and the source are credited.}. \\
\bigskip
\bigskip

\begin{center}
{\Large \textbf{Relationships Between Six Circumcircles}} \\
\medskip
\bigskip
\textsc{Stanley Rabinowitz} \\
545 Elm St Unit 1,  Milford, New Hampshire 03055, USA \\
e-mail: \href{mailto:stan.rabinowitz@comcast.net}{stan.rabinowitz@comcast.net} \\
web: \url{http://www.StanleyRabinowitz.com/} \\
\end{center}
\bigskip

\textbf{Abstract.} If $P$ is a point inside $\triangle ABC$, then the cevians
through $P$ divide $\triangle ABC$ into six small triangles.
We give theorems about the relationships between the radii of the
circumcircles of these triangles. We also state some results about the
relationships between the circumcenters of these triangles.

\medskip
\textbf{Keywords.} Circumcircles, triangle geometry, circumcenters, cevians, cevasix configuration.

\medskip
\textbf{Mathematics Subject Classification (2010).} 51M04.

\bigskip
\bigskip
\section{Introduction}

\newcommand{\degrees}{^\circ}

Let $P$ be any point inside a triangle $ABC$. The cevians
through $P$ divide $\triangle ABC$ into six smaller triangles, labeled $T_1$ through $T_6$
as shown in Figure \ref{fig:sixTriangles}.

\begin{figure}[h!t]
\centering
\includegraphics[width=0.45\linewidth]{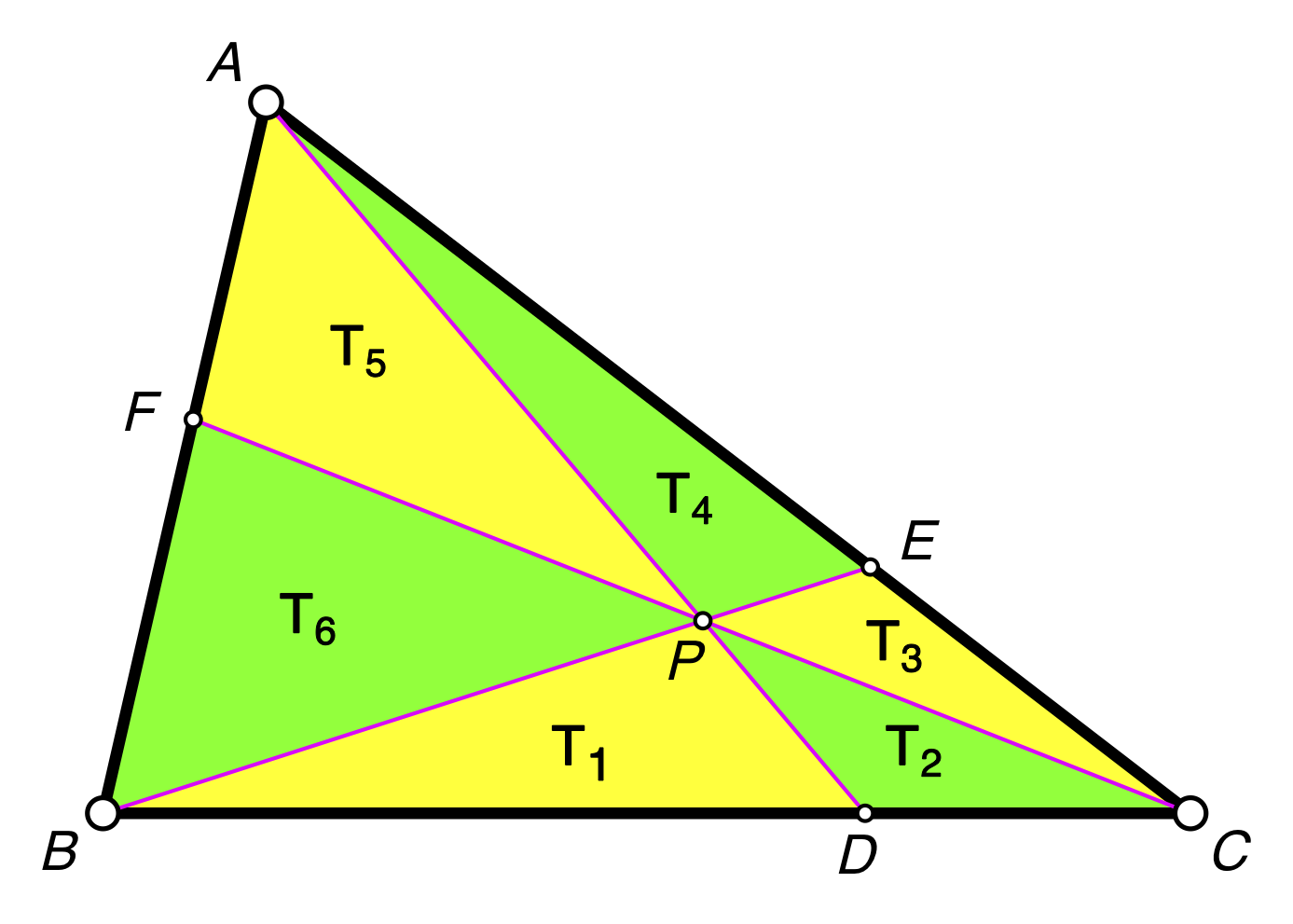}
\caption{numbering of the six triangles}
\label{fig:sixTriangles}
\end{figure}

The relationships between the radii of the circles inscribed in these triangles was investigated in \cite{Rabinowitz-in}.
The relationships between the radii of certain excircles associated with these triangles was investigated in \cite{Rabinowitz-ex}.
In this paper, we will investigate the relationships between the radii of the circles circumscribed about these triangles.


\medskip
We will make use of The Extended Law of Sines which states that if $a$, $b$, and $c$ are the lengths of the sides of a triangle opposite angles $A$, $B$, and $C$, then
$${a\over\sin A}={b\over\sin B}={c\over\sin C}=2R$$
where $R$ is the circumradius of $\triangle ABC$.

\section{Radii}

We begin with some relationships between the radii of the six circumcircles.

\begin{theorem}
\label{thm:general}
Let $P$ be any point inside $\triangle ABC$.
The cevians through $P$ divide $\triangle ABC$ into six smaller triangles, labeled $T_1$ through $T_6$
as shown in Figure \ref{fig:sixTriangles}.
Let $R_i$ be the circumradius of $T_i$.
Then $R_1R_3R_5=R_2R_4R_6$.
\end{theorem}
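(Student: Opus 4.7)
Let the cevians be $AD$, $BE$, $CF$ with $D\in BC$, $E\in CA$, $F\in AB$, concurrent at $P$. I would begin by fixing the labeling so that the six triangles, going around $P$, are $T_1=\triangle APE$, $T_2=\triangle EPC$, $T_3=\triangle CPD$, $T_4=\triangle DPB$, $T_5=\triangle BPF$, $T_6=\triangle FPA$, so that $T_1,T_3,T_5$ and $T_2,T_4,T_6$ are the two alternating triples. The key geometric observation is that, because $P$ lies on each of the three cevians, the angles at $P$ in the triangles come in three pairs of vertical angles: $\angle APE=\angle DPB$, $\angle EPC=\angle BPF$, and $\angle CPD=\angle FPA$.

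Next I would apply the Extended Law of Sines to each $T_i$, using the side opposite $P$ (which lies along a side of $\triangle ABC$) and the angle at $P$. This gives
\[
2R_1=\frac{AE}{\sin\angle APE},\quad 2R_4=\frac{BD}{\sin\angle DPB},
\]
and the two analogous pairs for $(R_3,R_6)$ and $(R_5,R_2)$. Because the relevant angles at $P$ match by the vertical-angle observation, the sines cancel when I take the ratios $R_1/R_4$, $R_3/R_6$, $R_5/R_2$, yielding purely metric ratios involving the six subsegments $AF,FB,BD,DC,CE,EA$.

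Multiplying those three ratios together, I obtain
\[
\frac{R_1R_3R_5}{R_2R_4R_6}=\frac{AE\cdot CD\cdot BF}{EC\cdot DB\cdot FA}.
\]
The final step is to invoke Ceva's Theorem: since $AD$, $BE$, $CF$ are concurrent at $P$, we have $\dfrac{AF}{FB}\cdot\dfrac{BD}{DC}\cdot\dfrac{CE}{EA}=1$, which is exactly the reciprocal of the right-hand side. Therefore the ratio equals $1$ and $R_1R_3R_5=R_2R_4R_6$.

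The proof is essentially bookkeeping; the only real obstacle is making sure the cyclic labeling of the six triangles is consistent so that the vertical-angle pairing actually matches the intended product $R_1R_3R_5$ versus $R_2R_4R_6$, and that the resulting segment ratio is the Ceva expression (and not its reciprocal or some permuted variant). Once the labeling is nailed down, the argument reduces to one application of the Extended Law of Sines per triangle and one application of Ceva.
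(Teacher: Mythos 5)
Your proposal is correct and follows essentially the same route as the paper's proof: the Extended Law of Sines applied to each small triangle using the side on $\triangle ABC$ and the angle at $P$, the vertical-angle pairing of those angles, and Ceva's Theorem to finish. The only cosmetic difference is that you organize the computation as three ratios $R_1/R_4$, $R_3/R_6$, $R_5/R_2$ rather than comparing the two triple products directly.
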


\begin{proof}
By The Extended Law of Sines in $\triangle PBD$, we have
$$R_1={BD\over 2\sin\angle BPD},$$
with similar expressions for the other $R_i$.
Thus,
$$R_1R_3R_5={BD\over 2\sin\angle BPD}\cdot{CE\over 2\sin\angle CPE}\cdot{AF\over 2\sin\angle APF}\phantom{.}$$
and
$$R_2R_4R_6={DC\over 2\sin\angle DPC}\cdot{EA\over 2\sin\angle EPA}\cdot{FB\over 2\sin\angle FPB}.$$

But $BD\cdot CE\cdot AF=DC\cdot EA\cdot FB$ by Ceva's Theorem.
Also, angles $\angle BPD$ and $\angle EPA$ are vertical angles, so they are congruent and their sines are equal.
Similarly, $\sin\angle CPE=\sin\angle FPB$ and $\sin\angle APF=\sin\angle DPC$.
Therefore, we conclude that $R_1R_3R_5=R_2R_4R_6$.
\end{proof}

\begin{corollary}
\label{cor:9point}
Let $P$ be any point inside $\triangle ABC$.
The cevians through $P$ divide $\triangle ABC$ into six smaller triangles, labeled $T_1$ through $T_6$
as shown in Figure \ref{fig:sixTriangles}.
Let $r_i$ be the radius of the nine-point circle of $T_i$ (the circle through the midpoints of the sides).
Then $r_1r_3r_5=r_2r_4r_6$.
\end{corollary}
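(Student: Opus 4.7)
The plan is to reduce the corollary directly to Theorem \ref{thm:general} by exploiting the classical fact that the nine-point circle of any triangle has radius exactly half the circumradius of that triangle. The nine-point circle passes through the three side-midpoints, and the medial triangle is similar to the original triangle with ratio $1/2$, so its circumscribed circle (which is the nine-point circle of the original) has radius $R/2$.

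Applying this observation to each of the six sub-triangles $T_i$, I obtain $r_i = R_i/2$ for $i=1,\dots,6$. Therefore
\[
r_1 r_3 r_5 = \frac{R_1 R_3 R_5}{8}, \qquad r_2 r_4 r_6 = \frac{R_2 R_4 R_6}{8}.
\]
Theorem \ref{thm:general} gives $R_1R_3R_5 = R_2R_4R_6$, and dividing both sides by $8$ yields $r_1r_3r_5 = r_2r_4r_6$, which is the desired identity.

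There is essentially no obstacle here: the corollary is immediate once one invokes the scaling relationship between the nine-point radius and the circumradius. The only thing worth being careful about is making clear that the factor of $1/2$ is the same for each sub-triangle, so the cubed factor of $1/8$ cancels symmetrically between the two sides of the equation.
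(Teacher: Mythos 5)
Your proposal is correct and is essentially identical to the paper's own proof, which likewise derives the corollary immediately from the relation $R_i = 2r_i$ together with Theorem \ref{thm:general}. The only difference is that you spell out the cancellation of the factor $1/8$, which the paper leaves implicit.
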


\begin{proof}
This follows immediately from the fact that $R_i=2r_i$.
\end{proof}

We have some additional results for specific locations of point $P$.

\begin{theorem}
\label{thm:circumcenter}
Let $O$ be the circumcenter of $\triangle ABC$ and assume that $O$ lies inside $\triangle ABC$.
The cevians through $O$ divide $\triangle ABC$ into six smaller triangles, labeled $T_1$ through $T_6$
as shown in Figure \ref{fig:sixTriangles}.
Let $R_i$ be the circumradius of $T_i$.
Then $R_1=R_2$, $R_3=R_4$, and $R_5=R_6$.
\end{theorem}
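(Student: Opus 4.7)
The plan is to exploit the Extended Law of Sines on the shared cevian between each pair of adjacent small triangles, combined with the fact that the circumcenter produces three isoceles triangles $OBC$, $OCA$, $OAB$.

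Following the notation in the proof of Theorem \ref{thm:general}, let $D$, $E$, $F$ be the feet of the cevians on sides $BC$, $CA$, $AB$ respectively, so that $T_1 = \triangle OBD$ and $T_2 = \triangle ODC$ share the cevian segment $OD$. First I would apply the Extended Law of Sines in each of these triangles, using $OD$ as the chosen side: in $T_1$ the side $OD$ is opposite the angle at $B$, namely $\angle OBD = \angle OBC$, and in $T_2$ the side $OD$ is opposite the angle at $C$, namely $\angle OCD = \angle OCB$. Thus
\[
R_1 = \frac{OD}{2\sin\angle OBC}, \qquad R_2 = \frac{OD}{2\sin\angle OCB}.
\]

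Next I would invoke the hypothesis that $O$ is the circumcenter of $\triangle ABC$, so $OB = OC$, which makes $\triangle OBC$ isoceles and gives $\angle OBC = \angle OCB$. Therefore the two displayed expressions coincide and $R_1 = R_2$. Exactly the same argument applied to the shared cevian $OE$ (using $OC = OA$ in the isoceles $\triangle OCA$) yields $R_3 = R_4$, and applied to the shared cevian $OF$ (using $OA = OB$ in $\triangle OAB$) yields $R_5 = R_6$.

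There is no real obstacle here beyond bookkeeping; the only thing to be careful about is matching each small triangle to the correct shared cevian and identifying which angle of the small triangle is opposite that cevian. This is handled uniformly by always choosing the cevian $OD$, $OE$, or $OF$ as the side to feed into the Extended Law of Sines, because the angle opposite it is automatically one of the base angles of the relevant isoceles triangle $\triangle OBC$, $\triangle OCA$, or $\triangle OAB$.
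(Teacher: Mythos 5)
Your proof is correct and follows essentially the same route as the paper: both apply the Extended Law of Sines to the two small triangles sharing the cevian $OD$ and reduce the claim to the equality $OB=OC$. The only cosmetic difference is the choice of side--angle pair: the paper uses the equal sides $OB$, $OC$ against the supplementary angles at $D$, while you use the common side $OD$ against the equal base angles of the isosceles $\triangle OBC$.
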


\begin{figure}[h!t]
\centering
\includegraphics[width=0.45\linewidth]{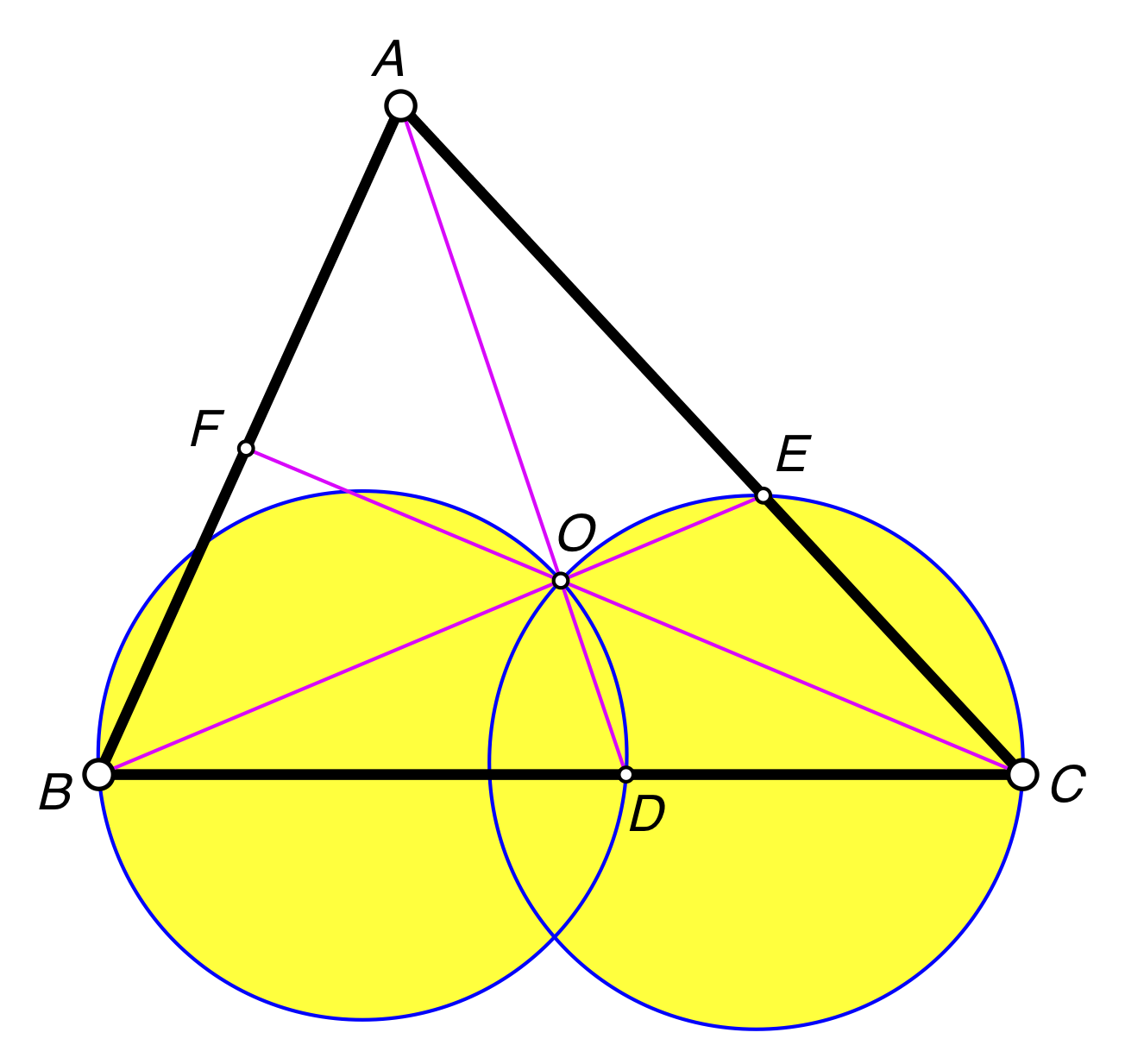}
\caption{Circumcenter: $R_1=R_2$}
\label{fig:circumcenter}
\end{figure}

\begin{proof}
By symmetry, it suffices to show that $R_1=R_2$ (Figure \ref{fig:circumcenter}).
Since $O$ is the circumcenter of $\triangle ABC$, $OB=OC$.
Angles $\angle ODB$ and $\angle ODC$ are supplementary, so their sines are equal.
Thus, by The Extended Law of Sines, we have
$$R_1={OB\over 2\sin\angle ODB}={OC\over 2\sin\angle ODC}=R_2$$
as required.
\end{proof}

\begin{theorem}
\label{thm:Nagel}
Let $N$ be the Nagel Point of $\triangle ABC$.
The cevians through $N$ divide $\triangle ABC$ into six smaller triangles, labeled $T_1$ through $T_6$
as shown in Figure \ref{fig:sixTriangles}.
Let $R_i$ be the circumradius of $T_i$.
Then $R_1=R_4$, $R_2=R_5$, and $R_3=R_6$.
\end{theorem}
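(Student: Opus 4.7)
The plan is to mimic the proof of Theorem \ref{thm:general}, writing each $R_i$ via the Extended Law of Sines applied in the little triangle $T_i$ at the vertex $P=N$, and then reading off the three equalities from the known segment lengths produced by the Nagel cevians.

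First I would recall the defining feature of the Nagel point: the cevian from $A$ meets $BC$ at the point where the $A$-excircle is tangent to $BC$, and analogously for the other two cevians. Writing $s$ for the semiperimeter of $\triangle ABC$, the standard tangent-length computation (excircle tangency lengths) gives
\[
BD=s-c,\quad DC=s-b,\quad CE=s-a,\quad EA=s-c,\quad AF=s-b,\quad FB=s-a.
\]
The crucial observation is that these six lengths match up in three opposite pairs:
\[
BD=EA=s-c,\qquad DC=AF=s-b,\qquad CE=FB=s-a,
\]
and each pair consists of one segment bounding $T_i$ and the corresponding segment bounding $T_{i+3}$.

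Next I would, exactly as in the proof of Theorem \ref{thm:general}, apply the Extended Law of Sines in $\triangle BND$ and $\triangle EN A$:
\[
R_1=\frac{BD}{2\sin\angle BND},\qquad R_4=\frac{EA}{2\sin\angle ENA}.
\]
Since $\angle BND$ and $\angle ENA$ are vertical angles, their sines are equal; combined with $BD=EA$ this yields $R_1=R_4$. The same argument, using the vertical angles $\angle DNC=\angle ANF$ together with $DC=AF$, gives $R_2=R_5$; and using $\angle CNE=\angle FNB$ with $CE=FB$ gives $R_3=R_6$.

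There is no real obstacle here: the only nontrivial input is the identification $BD=s-c$ etc., which is the defining property of the Nagel cevians. Once that is in hand, the theorem falls out of the same Extended Law of Sines / vertical angles argument already used in Theorem \ref{thm:general}. I would therefore keep the write-up very short, quoting the excircle tangent lengths as a known fact and then doing the three one-line verifications (or just one, noting that the other two are analogous).
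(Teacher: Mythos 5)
Your proposal is correct and follows essentially the same route as the paper: the paper likewise quotes the excircle tangent-length fact $BD=AE=s-c$, applies the Extended Law of Sines in $\triangle BND$ and $\triangle ENA$, and uses the vertical angles $\angle BND=\angle ENA$ to get $R_1=R_4$, disposing of the other two equalities by symmetry. The only cosmetic difference is that you write out all six tangent lengths explicitly rather than invoking symmetry after the first pair.
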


\begin{figure}[h!t]
\centering
\includegraphics[width=0.5\linewidth]{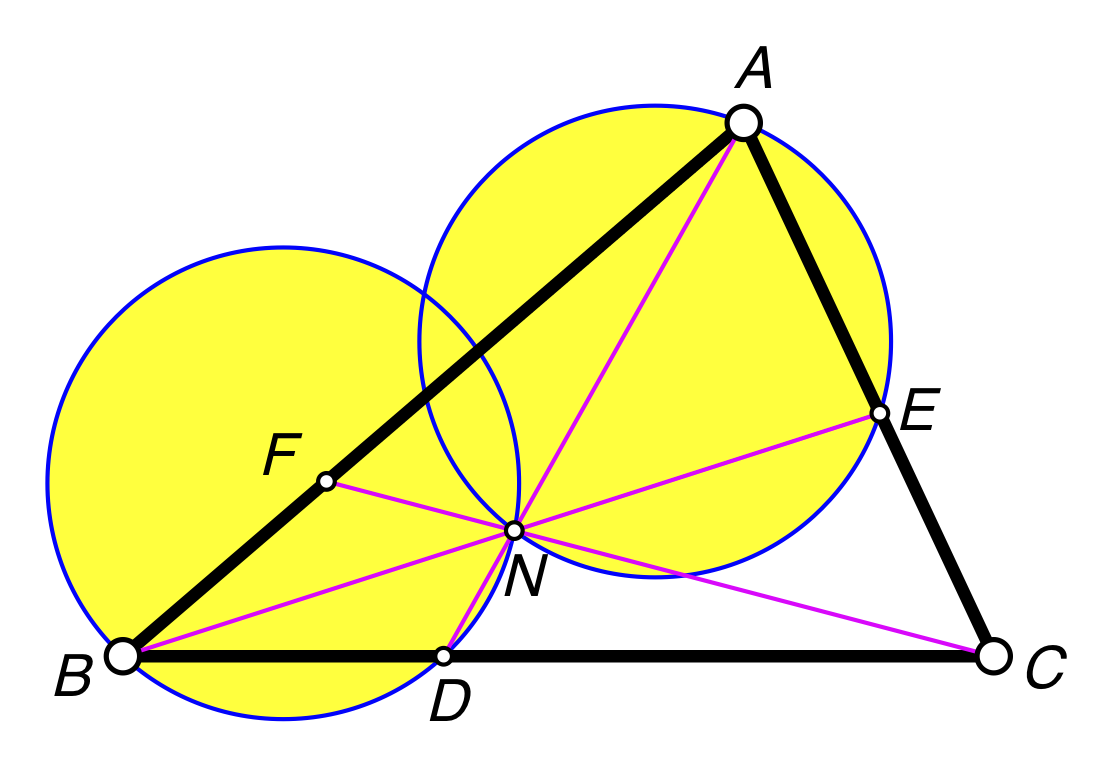}
\caption{Nagel Point: $R_1=R_4$}
\label{fig:Nagel}
\end{figure}

Note: The Nagel Point of a triangle is the point of concurrence of $AD$, $BE$, and $CF$,
where $D$, $E$, and $F$ are the points where the excircles of $\triangle ABC$
touch the sides $BC$, $CA$, and $AB$, respectively \cite[p.~160]{Altshiller-Court}.
The Nagel point is usually denoted $Na$ or $N_a$,
but here we will refer to it as $N$, for simplicity.

\begin{proof}
First note that by symmetry, it suffices to show that $R_1=R_4$ (Figure \ref{fig:Nagel}).
If $BC=a$, $CA=b$, $AB=c$, and $s=(a+b+c)/2$, then it is known that
$BD=AE=s-c$ \cite[p.~88]{Altshiller-Court}.
Thus, by The Extended Law of Sines and the fact that $\angle BND=\angle ENA$, we have
$$R_1={BD\over 2\sin\angle BND}={AE\over 2\sin\angle ENA}=R_4$$
as required.
\end{proof}

\begin{theorem}
\label{thm:orthocenter}
Let $H$ be the orthocenter of $\triangle ABC$.
The cevians through $H$ divide $\triangle ABC$ into six smaller triangles, labeled $T_1$ through $T_6$
as shown in Figure \ref{fig:sixTriangles}.
Let $C_i$ be the circumcircle of $T_i$.
Let $R_i$ be the radius of $C_i$.
Then $R_1=R_6$, $R_2=R_3$, and $R_4=R_5$.
\end{theorem}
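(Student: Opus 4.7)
The plan is to exploit the fact that when $P = H$ is the orthocenter, the three cevians are the altitudes $AD$, $BE$, $CF$, so the feet $D$, $E$, $F$ are right-angle vertices of the six small triangles. Concretely, $HD \perp BC$ gives $\angle BDH = \angle CDH = 90\degrees$, and similarly at $E$ and $F$.

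Next I would pair the triangles by which vertex of $\triangle ABC$ they touch. The pair $(T_1, T_6) = (\triangle BHD, \triangle FHB)$ meets at $B$, the pair $(T_2, T_3) = (\triangle DHC, \triangle CHE)$ meets at $C$, and the pair $(T_4, T_5) = (\triangle EHA, \triangle AHF)$ meets at $A$. Within each pair, both triangles are right-angled (at a foot of an altitude) and share the segment from the common vertex to $H$. For the $B$-pair, for example, both $\triangle BHD$ and $\triangle BHF$ have $BH$ as the hypotenuse opposite the right angle.

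The key step is then the standard fact that the hypotenuse of a right triangle is a diameter of its circumscribed circle (equivalently, apply the Extended Law of Sines with $\sin 90\degrees = 1$ to the sides $BD$ and $BF$ opposite the right angles). This immediately gives $R_1 = R_6 = BH/2$. Applying the identical argument to the pairs at $C$ and at $A$ yields $R_2 = R_3 = CH/2$ and $R_4 = R_5 = AH/2$, which is exactly the conclusion.

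I do not expect any real obstacle here: the entire proof hinges on the single observation that the two small triangles adjacent to each vertex of $\triangle ABC$ are right triangles sharing a common hypotenuse, namely the segment from that vertex to $H$. The only thing to be slightly careful about is verifying that the labeling convention for $T_1, \dots, T_6$ from the figure indeed pairs the triangles as $(T_1,T_6)$, $(T_2,T_3)$, $(T_4,T_5)$ around the vertices $B$, $C$, $A$; this is consistent with the side labels $BD, DC, CE, EA, AF, FB$ used in the proof of Theorem \ref{thm:general}.
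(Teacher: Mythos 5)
Your proof is correct and rests on the same key observation as the paper's: when $P=H$ the cevians are altitudes, so the two small triangles meeting at each vertex of $\triangle ABC$ are right-angled at the feet $D$, $E$, $F$ and share the segment from that vertex to $H$. The paper packages this as the cyclic quadrilateral $BDHF$ (concluding that $C_1$ and $C_6$ actually coincide), while you invoke the hypotenuse-is-a-diameter fact to get $R_1=R_6=BH/2$ directly; the two arguments are equivalent, with yours additionally yielding the explicit values $BH/2$, $CH/2$, $AH/2$ of the radii.
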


\begin{figure}[h!t]
\centering
\includegraphics[width=0.5\linewidth]{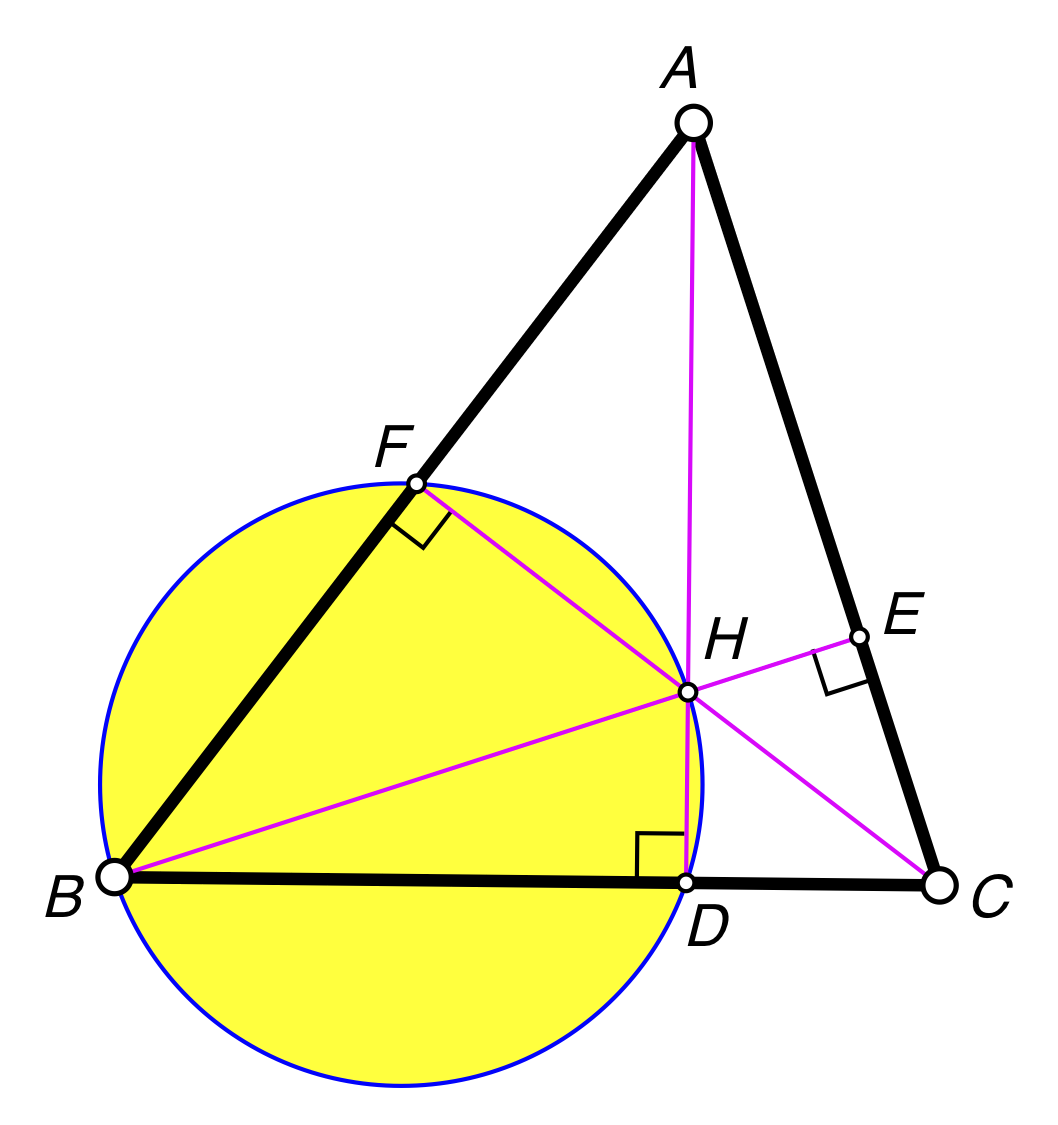}
\caption{Orthocenter: $R_1=R_6$}
\label{fig:orthocenter}
\end{figure}

\begin{proof}
By symmetry, it suffices to show that $R_1=R_4$ (Figure \ref{fig:orthocenter}),
i.e., that $C_1$ and $C_6$ coincide.
Since $\angle BDH+\angle HFB=180\degrees$, quadrilateral $BDHF$ is cyclic.
Thus, the circle through points $B$, $D$, and $H$ is the same as the circle through points $B$, $F$, and $H$.
\end{proof}

\section{Circumcenters}

Now we collect together some interesting results concerning the centers of the six circumcircles.
Most, but not all, of these results are scattered about in the literature.
We will use the notation [XYZ] to denote the area of $\triangle XYZ$.

\begin{theorem}
\label{thm:twoTriangles}
Let $P$ be any point inside $\triangle ABC$.
The cevians through $P$ divide $\triangle ABC$ into six smaller triangles, labeled $T_1$ through $T_6$
as shown in Figure \ref{fig:sixTriangles}.
Let $O_i$ be the circumcenter of $T_i$.
Then $[O_1O_3O_5]=[O_2O_4O_6]$
(Figure \ref{fig:twoTriangles}).
\end{theorem}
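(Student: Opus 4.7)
My plan is a direct calculation in complex coordinates, exploiting the clean closed form for the circumcenter of a triangle with one vertex at a chosen origin. Place $P$ at the origin of $\mathbb{C}$ and let $\omega_1,\omega_2,\omega_3$ be unit complex numbers along the three cevians emanating from $P$; with the labelling in the proof of Theorem~\ref{thm:general}, the six outer vertices are
\[
B = b\omega_1,\ D = d\omega_2,\ C = c\omega_3,\ E = -e\omega_1,\ A = -a\omega_2,\ F = -f\omega_3,
\]
where $a = PA,\ b = PB,\ \dots,\ f = PF$.

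The key ingredient is the formula that the circumcenter of the triangle $0,z,w$ equals $(|w|^2 z - |z|^2 w)/(z\bar w - w \bar z)$. Applied to each $T_i$, this yields an explicit expression
\[
O_i \;=\; \frac{\alpha_i\,\omega_j + \beta_i\,\omega_k}{2i\sin(\theta_j - \theta_k)},
\]
where $\omega_k = e^{i\theta_k}$ and $\alpha_i,\beta_i$ are (signed) cevian segment lengths. Then, using the standard identity that the signed area of $\triangle XYZ$ equals $\tfrac{1}{2}\Im(\bar X Y + \bar Y Z + \bar Z X)$, I would compute the imaginary parts of $\bar O_1 O_3 + \bar O_3 O_5 + \bar O_5 O_1$ and of $\bar O_2 O_4 + \bar O_4 O_6 + \bar O_6 O_2$. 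Each product $\bar O_i O_j$ becomes a quotient whose numerator expands into four monomials of the form (length)$\cdot$(length)$\cdot e^{\pm i\phi_k}$, where $\phi_1,\phi_2,\phi_3$ are the three angles at $P$ between consecutive cevians (so $\phi_1+\phi_2+\phi_3=\pi$); taking $\Im$ replaces each exponential with $\pm\sin\phi_k$.

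The main obstacle is pure bookkeeping. After clearing the common denominator $4\,s_1 s_2 s_3$ with $s_k := \sin\phi_k$, I expect both imaginary sums to reduce to the same symmetric expression
\[
\frac{1}{4 s_1 s_2 s_3}\bigl[(bc+ef)s_1 s_2 + (ca+df)s_1 s_3 + (ab+de)s_2 s_3 - cf\,s_1^2 - be\,s_2^2 - ad\,s_3^2\bigr],
\]
so that the proof reduces to matching six coefficient pairs term by term. Both triangles $O_1O_3O_5$ and $O_2O_4O_6$ inherit the orientation of the hexagon $O_1O_2\cdots O_6$, so equality of signed areas yields equality of unsigned areas. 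As a side remark, the calculation never uses Ceva's relation among $a,b,\dots,f$: the identity $[O_1O_3O_5]=[O_2O_4O_6]$ in fact holds for any six points chosen two per ray on three arbitrary concurrent lines through $P$.
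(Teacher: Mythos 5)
Your plan is correct in outline, but it is an entirely different route from the paper's. The paper (following Dubrovsky) argues synthetically: $O_i$ and $O_{i+1}$ both lie on the perpendicular bisector of the chord their triangles share, so $O_1O_6\perp BP$, $O_3O_4\perp PE$, etc.; since $B$, $P$, $E$ are collinear this forces the hexagon $O_1O_2O_3O_4O_5O_6$ to have opposite sides parallel, and a cited lemma on such hexagons gives $[O_1O_3O_5]=[O_2O_4O_6]$ at once. Your complex-coordinate computation replaces that structural observation with direct expansion. The ingredients are all sound: the circumcenter of $0,z,w$ is indeed $(|w|^2z-|z|^2w)/(z\bar w-w\bar z)$, which for $z=re^{i\theta_j}$, $w=se^{i\theta_k}$ reduces to $(se^{i\theta_j}-re^{i\theta_k})/\bigl(2i\sin(\theta_j-\theta_k)\bigr)$, and spot checks (the centroid of an equilateral triangle, and an asymmetric configuration with angles $90^\circ,45^\circ,45^\circ$) confirm that your closed form for $\Im(\bar O_1O_3+\bar O_3O_5+\bar O_5O_1)$ is the common value of both sums. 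What your approach buys is an explicit formula for the shared area and a transparent verification that Ceva's relation is never needed -- though note the paper's argument never uses Ceva either, so the same generalization to arbitrary points, one per ray, on three concurrent lines is already implicit there. What it costs is that the term-by-term matching, which is the entire content of the proof, is only asserted (``I expect\dots''); until the twelve products are expanded and paired, this is a program rather than a proof, albeit one that will succeed. A small further remark: your closing sentence about orientations is unnecessary, since equality of the signed areas immediately forces equality of their absolute values; and the paper's route has the side benefit of producing the parallel-opposite-sides hexagon that is reused in the proof of Theorem \ref{thm:conic}.
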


\begin{figure}[h!t]
\centering
\includegraphics[width=0.5\linewidth]{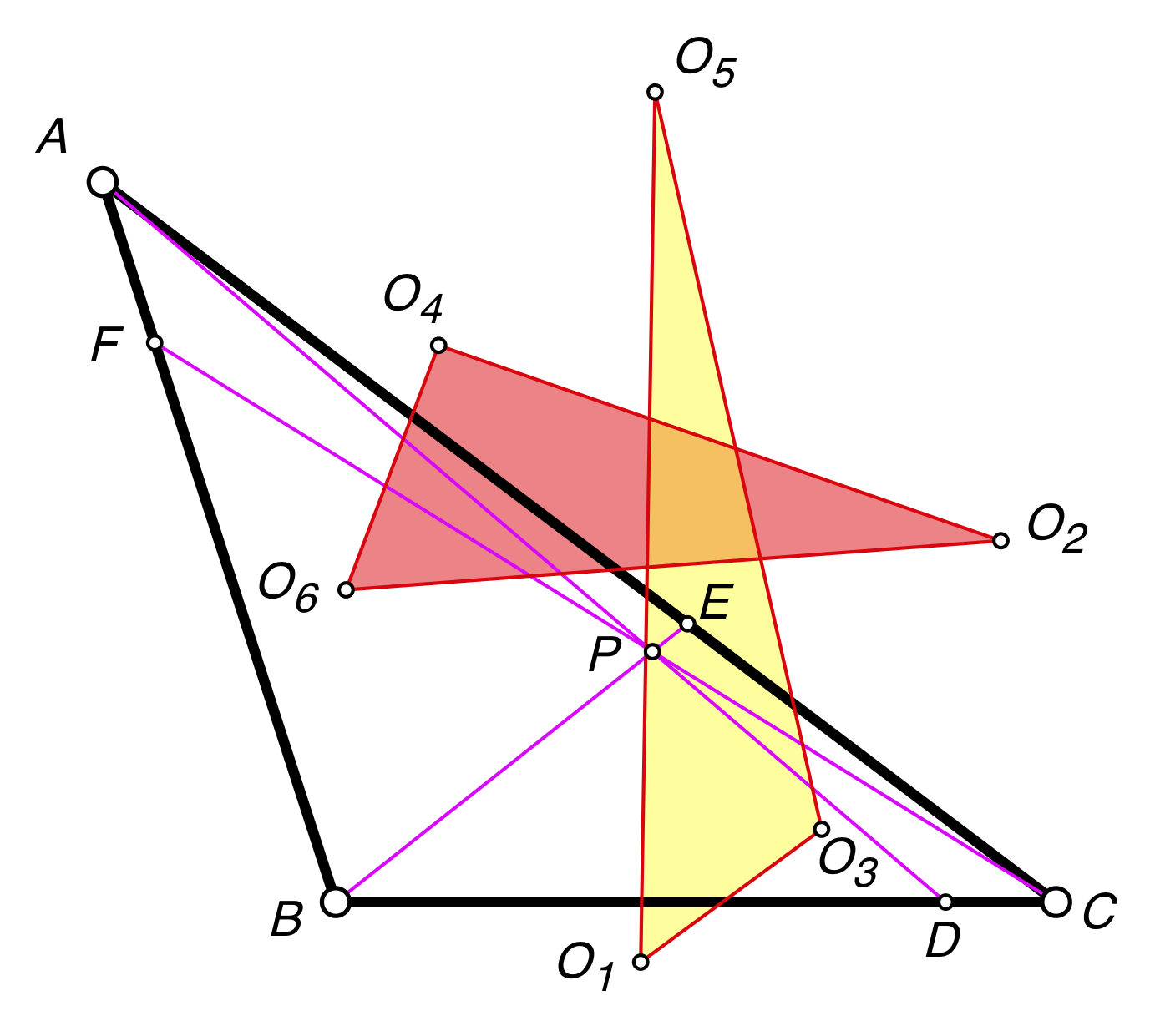}
\caption{Two triangles have same area}
\label{fig:twoTriangles}
\end{figure}

The following proof is due to Dubrovsky \cite{Dubrovsky}.

\begin{proof}
Since $O_1$ is the circumcenter of $\triangle BPD$, it must lie
on the perpendicular bisector of $BP$. The same remark holds true for $O_6$.
Therefore, $O_1O_6\perp BP$. In the same way, $O_6O_5\perp PF$, $O_5O_4\perp AP$,
$O_4O_3\perp PE$, $O_3O_2\perp CP$, and $O_2O_1\perp PD$.
Hence $O_1O_6\parallel O_3O_4$, $O_6O_5\parallel O_2O_3$, and $O_5O_4\parallel O_1O_2$.
Therefore, hexagon $O_1O_2O_3O_4O_5O_6$ has its opposite sides parallel.
But it is known \cite{Kantrowich} that if $ABCDEF$ is a hexagon with its opposite sides parallel,
then $[ACE]=[BDF]$. Thus $[O_1O_3O_5]=[O_2O_4O_6]$.
\end{proof}

\begin{theorem}
\label{thm:equalChords}
Let $M$ be the centroid of $\triangle ABC$.
The medians through $M$ divide $\triangle ABC$ into six smaller triangles, labeled $T_1$ through $T_6$
as shown in Figure \ref{fig:sixTriangles}.
Let $O_i$ be the circumcenter of $T_i$.
Then $O_1O_4=O_2O_5=O_3O_6$.
(Figure \ref{fig:equalChords}).
\end{theorem}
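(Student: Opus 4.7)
The plan is to use complex coordinates based at $M$. Placing the centroid at the origin and identifying each of $A,B,C$ with its complex affix, the centroid condition reads $A+B+C=0$, and the midpoints come out neatly as $D=-A/2$, $E=-B/2$, $F=-C/2$. In this setup each of the six small triangles has one vertex at the origin, which is ideal because the circumcenter of a triangle with vertices $\{0,z_1,z_2\}$ has the closed form
$$O \;=\; \frac{z_1 z_2\,(\bar z_1-\bar z_2)}{\bar z_1 z_2 - z_1 \bar z_2},$$
obtained by solving the two real linear equations $2\,\mathrm{Re}(O\bar z_j)=|z_j|^2$ that express $|O|=|O-z_j|$.

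I would then apply this formula to $T_1=\{0,B,-A/2\}$ and to $T_4=\{0,A,-B/2\}$ and subtract. After expansion the numerator of $O_4-O_1$ contains the sum $(2\bar A+\bar B)+(2\bar B+\bar A)=3(\bar A+\bar B)$, which the relation $\bar A+\bar B+\bar C=0$ collapses to $-3\bar C$. The result is the compact identity
$$O_4-O_1 \;=\; \frac{-3\,AB\bar C}{2(B\bar A-A\bar B)}, \qquad\text{so}\qquad O_1O_4 \;=\; \frac{3\,|A|\,|B|\,|C|}{2\,|B\bar A-A\bar B|}.$$

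The concluding step is the symmetry observation. The numerator is manifestly symmetric in $A,B,C$. For the denominator, substituting $C=-(A+B)$ yields $C\bar B-B\bar C=B\bar A-A\bar B$, and cyclically $A\bar C-C\bar A=B\bar A-A\bar B$, so this modulus is a cyclic invariant. Repeating the computation with $(T_3,T_6)$ and $(T_5,T_2)$ in place of $(T_1,T_4)$ therefore produces the same value, proving $O_1O_4=O_2O_5=O_3O_6$.

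The main obstacle is the algebraic simplification of $O_4-O_1$: each circumcenter is a rational function in $A,B,\bar A,\bar B$ over the purely imaginary quantity $B\bar A-A\bar B$, and the two must be combined carefully so that the relation $\bar A+\bar B=-\bar C$ triggers the key cancellation. Once that step is done the symmetry is transparent; as a bonus, using $|A|=\tfrac{2}{3}m_a$ (where $m_a,m_b,m_c$ denote the median lengths) together with $|B\bar A-A\bar B|=4[MAB]=\tfrac{4}{3}[ABC]$, one recovers the explicit common value $m_a m_b m_c/(3[ABC])$.
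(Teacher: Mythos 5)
Your argument is correct, and it takes a genuinely different route from the paper, whose entire proof of this theorem is the one-line citation ``This follows from Proposition 4 of \cite{Myakishev}.'' I checked your computation: with $M=0$, $A+B+C=0$, $D=-A/2$, $E=-B/2$, $F=-C/2$, your circumcenter formula gives
$$O_1=\frac{-AB(\bar A+2\bar B)}{2(B\bar A-A\bar B)},\qquad O_4=\frac{AB(2\bar A+\bar B)}{2(B\bar A-A\bar B)},$$
so that $O_4-O_1=\dfrac{3AB(\bar A+\bar B)}{2(B\bar A-A\bar B)}=\dfrac{-3AB\bar C}{2(B\bar A-A\bar B)}$ exactly as you claim; the substitution $C=-(A+B)$ does yield $C\bar B-B\bar C=A\bar C-C\bar A=B\bar A-A\bar B$, and the cyclic relabeling $A\mapsto B\mapsto C\mapsto A$ carries the pair $(T_1,T_4)$ to $(T_3,T_6)$ and then to $(T_5,T_2)$, so all three distances equal $3\,|A|\,|B|\,|C|\,/\,\bigl(2\,|B\bar A-A\bar B|\bigr)$. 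The only step you leave implicit is the routine expansion producing $O_4-O_1$, and it checks out. What your approach buys is a self-contained proof together with the explicit common value $m_am_bm_c/(3[ABC])$ --- essentially re-deriving the content of the proposition the paper cites --- at the cost of a coordinate computation; the paper's approach buys brevity by outsourcing the work to the literature, but leaves the reader without an argument and without the common length.
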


\begin{figure}[h!t]
\centering
\includegraphics[width=0.5\linewidth]{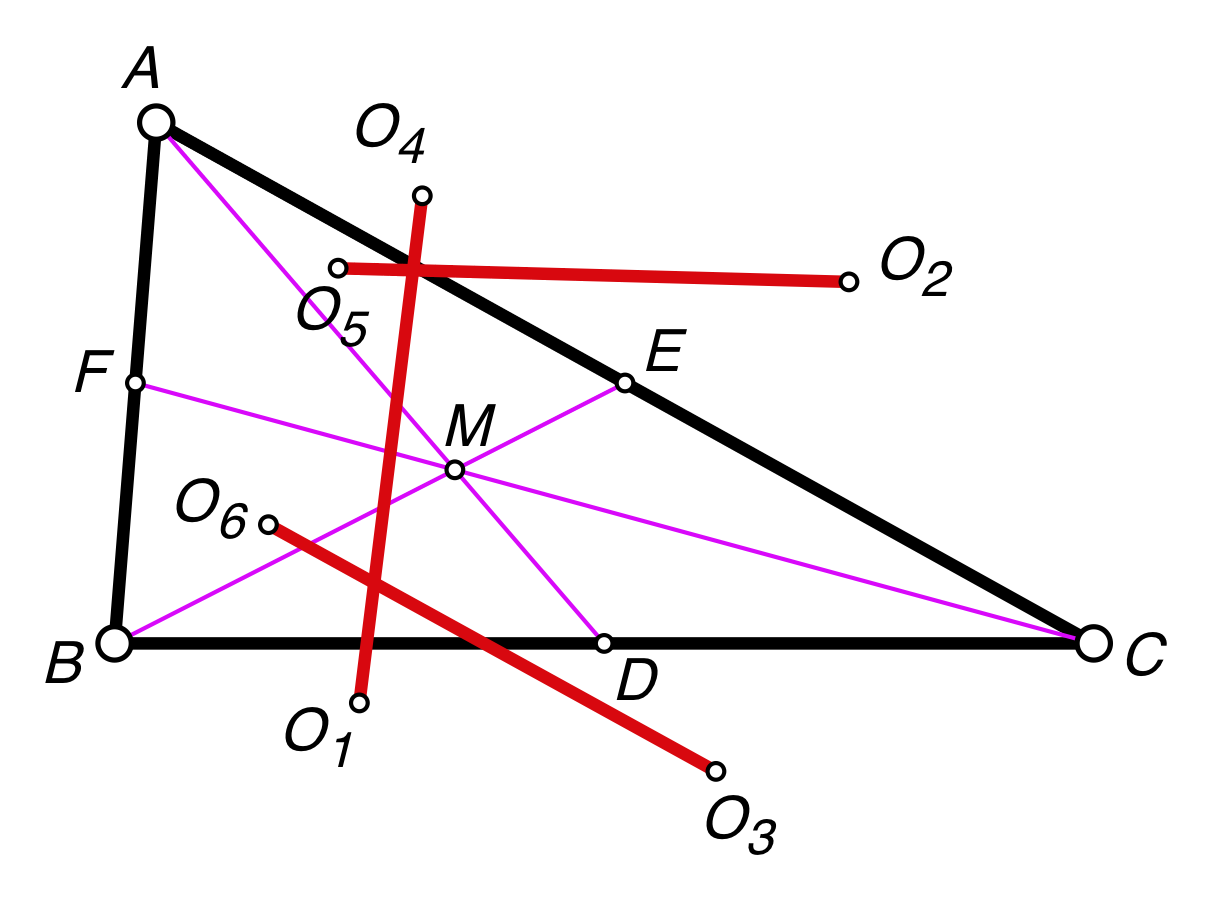}
\caption{Red segments are congruent}
\label{fig:equalChords}
\end{figure}

\begin{proof}
This follows from Proposition 4 of \cite{Myakishev}.
\end{proof}

The following result comes from \cite{Myakishev}.

\begin{theorem}
\label{thm:circle-M}
Let $P$ be any point inside $\triangle ABC$.
The cevians through $P$ divide $\triangle ABC$ into six smaller triangles, labeled $T_1$ through $T_6$
as shown in Figure \ref{fig:sixTriangles}.
Let $O_i$ be the circumcenter of $T_i$.
Then the points $O_i$ lie on a circle if and only if either $P$ is the centroid of $\triangle ABC$
(Figure \ref{fig:circle-M}) or $P$ is the orthocenter of $\triangle ABC$ (in which case $O_6=O_1$, $O_2=O_3$, and $O_4=O_5$).
\end{theorem}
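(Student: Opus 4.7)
The plan is to exploit the hexagon $\mathcal H = O_1O_2O_3O_4O_5O_6$, whose opposite sides were shown in the proof of Theorem~\ref{thm:twoTriangles} to be parallel. This built-in affine rigidity should reduce the question of concyclicity to a small number of scalar conditions on $P$.

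For the ``if'' direction, the orthocenter case is essentially free. The proof of Theorem~\ref{thm:orthocenter} exhibits cyclic quadrilaterals $BDHF$, etc., which identify the circumcircles in pairs: $C_1 = C_6$, $C_2 = C_3$, $C_4 = C_5$. Hence $O_1 = O_6$, $O_2 = O_3$, $O_4 = O_5$, so $\mathcal H$ degenerates to three (generically non-collinear) points, which trivially lie on a common circle. For the centroid case, I would combine Theorem~\ref{thm:equalChords}, which gives $O_1O_4 = O_2O_5 = O_3O_6$, with the parallel-sides structure of $\mathcal H$: for a hexagon with opposite sides parallel, equality of all three main diagonals lets one assemble the three pairs of parallel opposite sides into three isoceles trapezoids with matched diagonals, and gluing these together pins the six vertices onto a single circle whose center lies on the common perpendicular bisectors of the three parallel pairs.

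For the ``only if'' direction, suppose the $O_i$ all lie on a common circle $\Omega$. Each pair of opposite parallel sides of $\mathcal H$ then spans a cyclic trapezoid in $\Omega$, which is either isoceles or a (necessarily rectangular) parallelogram. In the fully isoceles case one recovers $O_1O_4 = O_2O_5 = O_3O_6$, and then, invoking Proposition~4 of \cite{Myakishev} (as used in Theorem~\ref{thm:equalChords}) in its converse form, $P$ must be the centroid. In the alternative scenario, the parallelogram condition collapses a pair of opposite vertices of $\mathcal H$ onto a pair of adjacent vertices on the other side, and I would argue that the resulting coincidence pattern is exactly the one forced by the cyclic quadrilaterals $BDHF$, $CEHD$, $AFHE$ of the orthocenter configuration, identifying $P$ as the orthocenter.

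The main obstacle will be the end of the converse, namely controlling ``mixed'' configurations in which one pair of opposite sides of $\mathcal H$ closes up as a parallelogram in $\Omega$ while another pair forms a proper isoceles trapezoid. I expect such mixed cases either to be impossible for $P$ interior to $\triangle ABC$ or to collapse to one of the two named cases, but verifying this cleanly is where the real work lies; if a synthetic argument proves too delicate, I would fall back on barycentric coordinates, translating ``$O_i$ concyclic'' into a single polynomial identity in the coordinates of $P$ and factoring it to isolate the centroid and orthocenter loci.
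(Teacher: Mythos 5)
The paper itself offers no proof of this theorem: it attributes the full equivalence to \cite{Myakishev} and notes only that the ``if'' part is van Lamoen's theorem, so your attempt must be judged on its own terms. The orthocenter half of the ``if'' direction is fine. The centroid half is salvageable but not yet a proof: from $O_1O_2\parallel O_4O_5$ and $O_1O_4=O_2O_5$ you correctly get that $O_1O_2O_4O_5$ is cyclic (likewise for the other two such quadrilaterals), but three circles, each sharing two points with each of the others, need not coincide. The missing step is a radical-axis argument: if the three circles were pairwise distinct, their pairwise radical axes would be the lines $O_1O_4$, $O_2O_5$, $O_3O_6$, which would then have to concur; one must then show separately that a hexagon with opposite sides parallel whose equal main diagonals are concurrent is centrally symmetric and hence cyclic anyway. ``Gluing'' the three trapezoids along common perpendicular bisectors does not substitute for this, since those bisectors only locate the centers of three a priori different circles.

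The ``only if'' direction is where the proposal genuinely fails. Two parallel chords of a circle always cut off an isosceles trapezoid, so concyclicity of the $O_i$ (absent degenerate sides) does yield $O_1O_4=O_2O_5=O_3O_6$; but you then invoke ``Proposition 4 of \cite{Myakishev} in its converse form'' to conclude that $P$ is the centroid. No such converse is stated in this paper or proved by you, and it carries essentially the entire weight of the hard direction --- you have reduced the theorem to an equally hard, unestablished claim. The degenerate branch is also misdiagnosed: the orthocenter case does not appear as a rectangle inscribed in $\Omega$ but as the collapse of three sides of the hexagon to points ($O_1=O_6$, $O_2=O_3$, $O_4=O_5$), where the parallel-chord argument gives no information because a zero-length chord has no well-defined perpendicular bisector; and the claim that this coincidence pattern forces $P$ to be the orthocenter (i.e., that $B$, $D$, $P$, $F$ concyclic together with the two analogous conditions implies $P=H$) is asserted, not proved. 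You flag the mixed cases yourself; as written the converse is a plan rather than a proof, and the coordinate computation you mention as a fallback is the only complete route you have actually indicated.
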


\begin{figure}[h!t]
\centering
\includegraphics[width=0.5\linewidth]{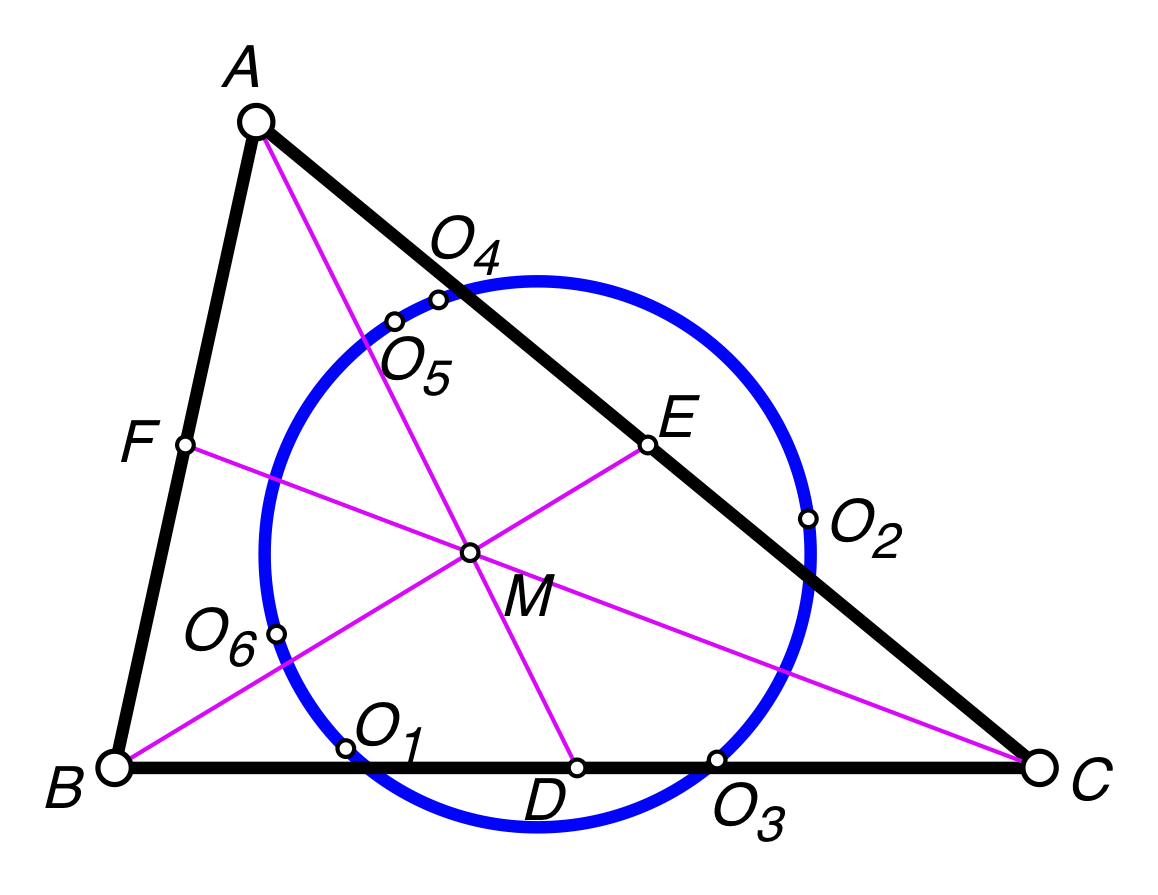}
\caption{$O_i$ lie on a circle when $P=M$}
\label{fig:circle-M}
\end{figure}

The ``if'' portion of this theorem is the well-known Van Lamoen's Theorem, \cite{Lamoen}.

\begin{theorem}
\label{thm:conic}
Let $P$ be any point inside $\triangle ABC$.
The cevians through $P$ divide $\triangle ABC$ into six smaller triangles, labeled $T_1$ through $T_6$
as shown in Figure \ref{fig:sixTriangles}.
Let $O_i$ be the circumcenter of $T_i$.
Then the points $O_i$ lie on a conic (Figure \ref{fig:conic}).
\end{theorem}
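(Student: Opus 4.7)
The plan is to leverage the work already done in the proof of Theorem \ref{thm:twoTriangles}. There it was established that each pair of opposite sides of the hexagon $O_1O_2O_3O_4O_5O_6$ is perpendicular to the same cevian, and so the three pairs of opposite sides are parallel: $O_1O_2\parallel O_4O_5$, $O_2O_3\parallel O_5O_6$, and $O_3O_4\parallel O_6O_1$. Given this, the conclusion will follow from the converse of Pascal's Theorem.

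In more detail, I would invoke the Braikenridge--Maclaurin Theorem: if the three pairs of opposite sides of a hexagon meet in three collinear points, then the six vertices of the hexagon lie on a (possibly degenerate) conic. Interpreting our configuration in the projective plane, any two parallel lines meet on the line at infinity. Hence the three intersection points of opposite sides of the hexagon $O_1O_2O_3O_4O_5O_6$ all lie on the line at infinity, and are therefore automatically collinear. Applying Braikenridge--Maclaurin, the six circumcenters $O_1,\dots,O_6$ lie on a conic, as required.

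The main obstacle is less a matter of substantive geometry and more one of carefully handling degenerate configurations, since nearly all of the hard work was done in the proof of Theorem \ref{thm:twoTriangles}. For example, when $P$ is the orthocenter the six centers coalesce into three pairs (as remarked in Theorem \ref{thm:circle-M}), and one obtains a conic that may reduce to a line pair, or the nondegenerate conic inscribing Van Lamoen's circle when $P$ is the centroid. The projective argument via Braikenridge--Maclaurin covers these cases uniformly, producing a (possibly reducible) conic through the $O_i$ in every situation, so I would simply remark on this interpretation rather than dwell on it.
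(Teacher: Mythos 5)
Your proposal is correct and follows essentially the same route as the paper: both derive the parallelism of opposite sides of the hexagon $O_1O_2O_3O_4O_5O_6$ from the perpendicularity of each side to a cevian segment (as in the proof of Theorem \ref{thm:twoTriangles}) and then apply the converse of Pascal's Theorem. Your additional remarks on the line at infinity and on degenerate cases merely flesh out details the paper leaves implicit.
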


\begin{figure}[h!t]
\centering
\includegraphics[width=0.48\linewidth]{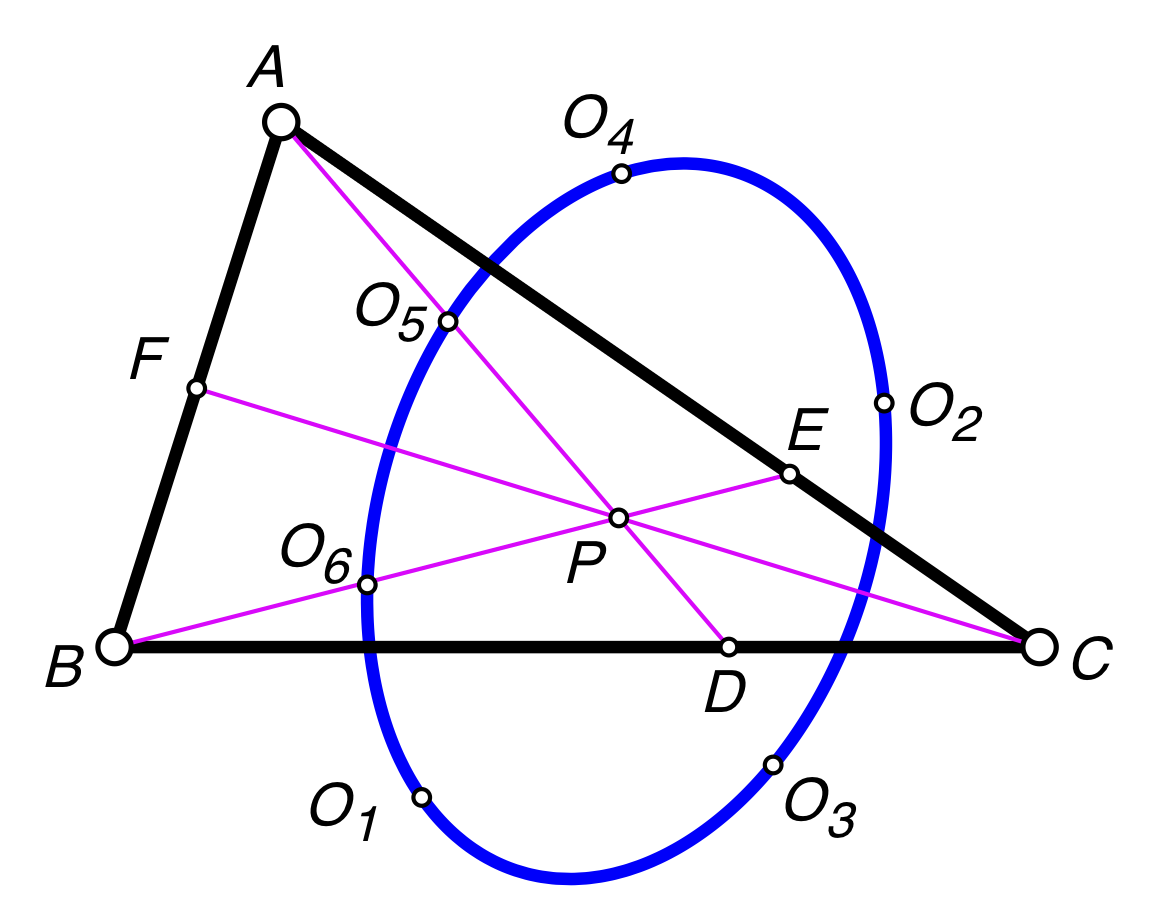}
\caption{$O_i$ lie on a conic}
\label{fig:conic}
\end{figure}

The following proof comes from \cite{Myakishev}.

\begin{proof}
Since $O_1O_6\parallel O_3O_4$, $O_6O_5\parallel O_2O_3$, and $O_5O_4\parallel O_1O_2$,
the result follows from the converse of Pascal's Theorem.
\end{proof}

Before stating our next result, we will need the following lemma
which comes from~\cite{Pavlakis}.

\begin{lemma}
\label{lemma:symmedian}
Let P be any point on the median $AD$ of $\triangle ABC$.
Let $BE$ and $CF$ be the cevians through $P$.
Suppose the circumcircles of triangles $BPF$ and $CEP$ meet at points $P$ and $Q$.
Then $\angle BPQ=\angle CPD$ (Figure \ref{fig:lemmaFigure}).
\end{lemma}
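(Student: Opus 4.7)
The claim $\angle BPQ=\angle CPD$ is equivalent to saying that the lines $PQ$ and $PD$ are isogonal with respect to $\angle BPC$; since $D$ is the midpoint of $BC$ (as $AD$ is a median), this is the statement that $PQ$ is the $P$-symmedian of $\triangle BPC$. My plan is to compute the direction of the line $PQ$ explicitly by means of an inversion centered at $P$, and then to recognize it as the reflection of the median direction $PD$ across the bisector of $\angle BPC$.

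I first set up vector coordinates at $P$: let $\hat u$ and $\hat v$ be unit vectors from $P$ toward $B$ and $C$, and write $b=PB$, $c=PC$, $e=PE$, $f=PF$, so that $B=b\hat u$, $C=c\hat v$, $E=-e\hat u$, $F=-f\hat v$. The first step is to translate the hypothesis ``$P$ lies on the median $AD$'' into a metric identity. A one-line mass-point calculation --- assign $B$ and $C$ equal masses (so their centroid is $D$) and any mass $m>0$ to $A$, so that $P$ is the centroid of the three --- gives $PB/PE=PC/PF=m+1$, hence
\[
 bf=ec,\qquad \lambda:=\tfrac{b}{e}=\tfrac{c}{f}>1.
\]

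Now invert at $P$ with any radius $r$. The circles $(BPF)$ and $(CEP)$ pass through $P$ and therefore invert to lines $\ell_1$ and $\ell_2$ through the images of $\{B,F\}$ and $\{C,E\}$ respectively; the second intersection $Q$ corresponds to $Q^{\ast}=\ell_1\cap\ell_2$ and lies on the same ray from $P$. In the affine $(\hat u,\hat v)$-coordinate system, $B^\ast,F^\ast$ have coordinates $(r^2/b,0)$ and $(0,-r^2/f)$, giving $\ell_1:\;b\alpha-f\beta=r^{2}$, and similarly $\ell_2:\;c\beta-e\alpha=r^{2}$. Solving this $2\times 2$ system yields
\[
 Q^{\ast}=\frac{r^{2}}{bc-ef}\bigl[(c+f)\hat u+(b+e)\hat v\bigr].
\]
Thus $\vec{PQ}$ is in the direction $(c+f)\hat u+(b+e)\hat v$, which under $b=\lambda e$, $c=\lambda f$ becomes a positive multiple of $f\hat u+e\hat v$.

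On the other hand $\vec{PD}=\tfrac12(B+C)=\tfrac12(b\hat u+c\hat v)$ has direction $e\hat u+f\hat v$. The two directions $f\hat u+e\hat v$ (for $PQ$) and $e\hat u+f\hat v$ (for $PD$) are interchanged by the linear involution swapping $\hat u$ and $\hat v$, which is precisely the reflection across the bisector of $\angle BPC$. Hence $PQ$ and $PD$ are isogonal in $\angle BPC$, giving $\angle BPQ=\angle CPD$. The only step requiring a genuine geometric idea is the translation $bf=ec$ of the median hypothesis; after that, the inversion collapses the second-intersection problem to a linear intersection and the isogonality is visible by inspection.
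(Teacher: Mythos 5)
Your proof is correct, but any comparison with ``the paper's proof'' is moot: the paper does not prove this lemma at all, it simply imports it from \cite{Pavlakis}. Your argument is therefore a genuine, self-contained addition. Both substantive steps check out. The mass-point translation of the median hypothesis into $PB/PE=PC/PF=m+1$, hence $b/e=c/f=\lambda>1$, is the right way to encode ``$D$ is the midpoint of $BC$'' metrically at $P$; and the inversion at $P$ correctly sends the two circles to the lines $b\alpha-f\beta=r^2$ and $c\beta-e\alpha=r^2$ in the $(\hat u,\hat v)$ frame, whose intersection $\frac{r^2}{bc-ef}\bigl[(c+f)\hat u+(b+e)\hat v\bigr]$ reduces under $b=\lambda e$, $c=\lambda f$ to the direction $f\hat u+e\hat v$, the image of the median direction $e\hat u+f\hat v$ under the swap $\hat u\leftrightarrow\hat v$, i.e.\ under reflection in the bisector of $\angle BPC$. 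One point deserves to be made explicit rather than left implicit in ``$\lambda>1$'': since $bc-ef=(\lambda^2-1)ef>0$, the two inverted lines are not parallel (so $Q$ exists) and the coefficients of $\hat u$ and $\hat v$ in $Q^{\ast}$ are positive, so the ray $PQ$ lies \emph{inside} $\angle BPC$; without that sign check the reflection argument would only determine $\angle BPQ$ up to a supplement. Conceptually, you have isolated the true content of the lemma --- that $PQ$ is the $P$-symmedian of $\triangle BPC$ --- which is cleaner than an angle chase in the two circles and slots directly into the use of the lemma in Theorem \ref{thm:O1236}.
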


\begin{figure}[h!t]
\centering
\includegraphics[width=0.5\linewidth]{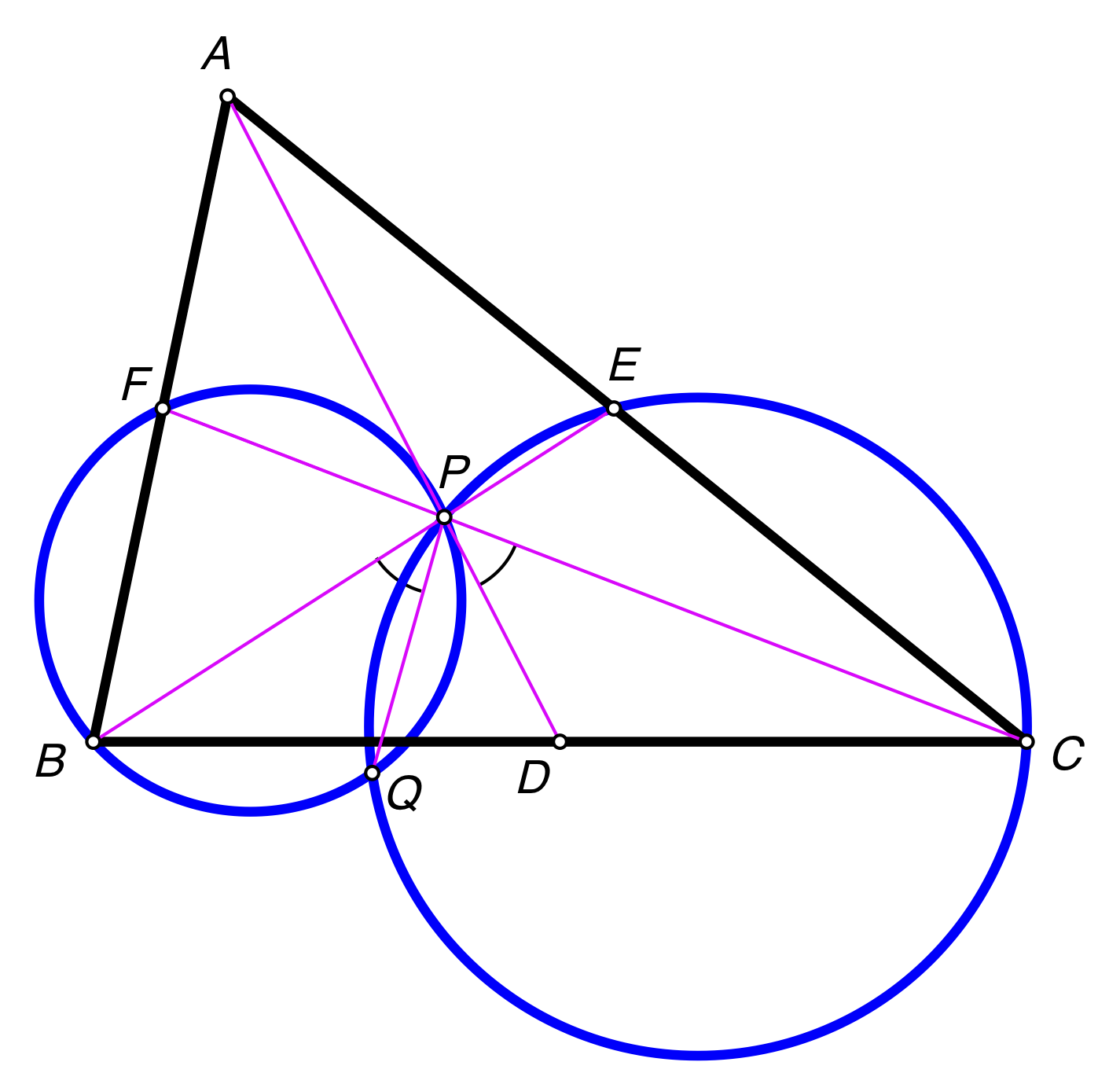}
\caption{}
\label{fig:lemmaFigure}
\end{figure}

\begin{theorem}
\label{thm:O1236}
Let P be any point on the median $AD$ of $\triangle ABC$.
The cevians through $P$ divide $\triangle ABC$ into six smaller triangles, labeled $T_1$ through $T_6$
as shown in Figure \ref{fig:sixTriangles}.
Let $O_i$ be the circumcenter of $T_i$.
Then (Figure \ref{fig:circles-median})\\
(a) the points $O_1$, $O_2$, $O_3$, and $O_6$ lie on a circle,\\
(b) the points $O_3$, $O_4$, $O_5$, and $O_6$ lie on a circle.
\end{theorem}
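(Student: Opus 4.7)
The plan is to use Lemma~\ref{lemma:symmedian} together with the perpendicularity relations established in the proof of Theorem~\ref{thm:twoTriangles}. The key dictionary is that whenever two of the triangles $T_i$ share an edge through $P$, both of their circumcenters lie on the perpendicular bisector of that edge, so the segment $O_iO_j$ is perpendicular to the corresponding ray from $P$. This converts any angle $\angle O_iO_jO_k$ into a copy (modulo $180\degrees$) of an angle at $P$, after which the converse of the inscribed angle theorem produces concyclicity as soon as two such angles match.

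To import Lemma~\ref{lemma:symmedian} into the picture, I would introduce the second intersection $Q$ of the circumcircles of $T_3=\triangle CPE$ and $T_6=\triangle BPF$. Because $O_3$ and $O_6$ both lie on the perpendicular bisector of the common chord $PQ$, we gain the extra relation $O_3O_6\perp PQ$, which connects the otherwise unrelated vertices $O_3$ and $O_6$ appearing in both quadrilaterals of the theorem. The lemma gives $\angle BPQ=\angle CPD$, and hence (since $D$ and $Q$ both lie strictly inside the angle $\angle BPC$) also $\angle QPC=\angle BPD$.

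For part~(a), using $O_1O_6\perp BP$, $O_1O_2\perp PD$, $O_3O_2\perp CP$, and $O_3O_6\perp PQ$, I would compute
\[
\angle O_6O_1O_2 \;\equiv\; \angle BPD \pmod{180\degrees}, \qquad
\angle O_6O_3O_2 \;\equiv\; \angle QPC \pmod{180\degrees}.
\]
The identity $\angle BPD=\angle QPC$ obtained above makes these equal, so $O_1$ and $O_3$ subtend equal directed angles on the segment $O_2O_6$, forcing $O_1,O_2,O_3,O_6$ to be concyclic.

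For part~(b), using additionally $O_3O_4\perp PE$, $O_4O_5\perp AP$, and $O_5O_6\perp PF$, the same dictionary gives $\angle O_6O_3O_4\equiv\angle QPE$ and $\angle O_6O_5O_4\equiv\angle FPA$ modulo $180\degrees$. Because $B,P,E$ are collinear, $\angle QPE\equiv\angle QPB\equiv\angle CPD$ modulo $180\degrees$ by the lemma; because $A,P,D$ are collinear and $C,P,F$ are collinear, $\angle CPD\equiv\angle CPA\equiv\angle FPA$ modulo $180\degrees$. So the two angles at $O_3$ and $O_5$ agree, and $O_3,O_4,O_5,O_6$ are concyclic. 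The only real obstacle is angular bookkeeping: without care, the lemma's equality of unsigned angles could translate into a supplementary rather than equal relation at the $O_i$'s. Working consistently with directed angles modulo $180\degrees$, as above, sidesteps this cleanly, since perpendicularity acts as the isometry ``rotate by $90\degrees$'' and preserves angle equalities.
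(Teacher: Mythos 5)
Your argument is correct and follows essentially the same route as the paper: introduce the second intersection $Q$ of the circumcircles of $T_3$ and $T_6$ so that $O_3O_6\perp PQ$, invoke Lemma \ref{lemma:symmedian}, and transfer angles at $P$ to angles of the hexagon of circumcenters via the perpendicularity relations from Theorem \ref{thm:twoTriangles}. The only differences are cosmetic: the paper performs the angle transfer through auxiliary cyclic quadrilaterals built from the feet of the perpendiculars and compares the angles at $O_2$ and $O_6$ subtending $O_1O_3$, whereas you use directed angles modulo $180\degrees$ and compare the angles at $O_1$ and $O_3$ subtending $O_2O_6$; you also write out part (b) explicitly, which the paper delegates to \cite{Pavlakis}.
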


\begin{figure}[h!t]
\centering
\includegraphics[width=0.5\linewidth]{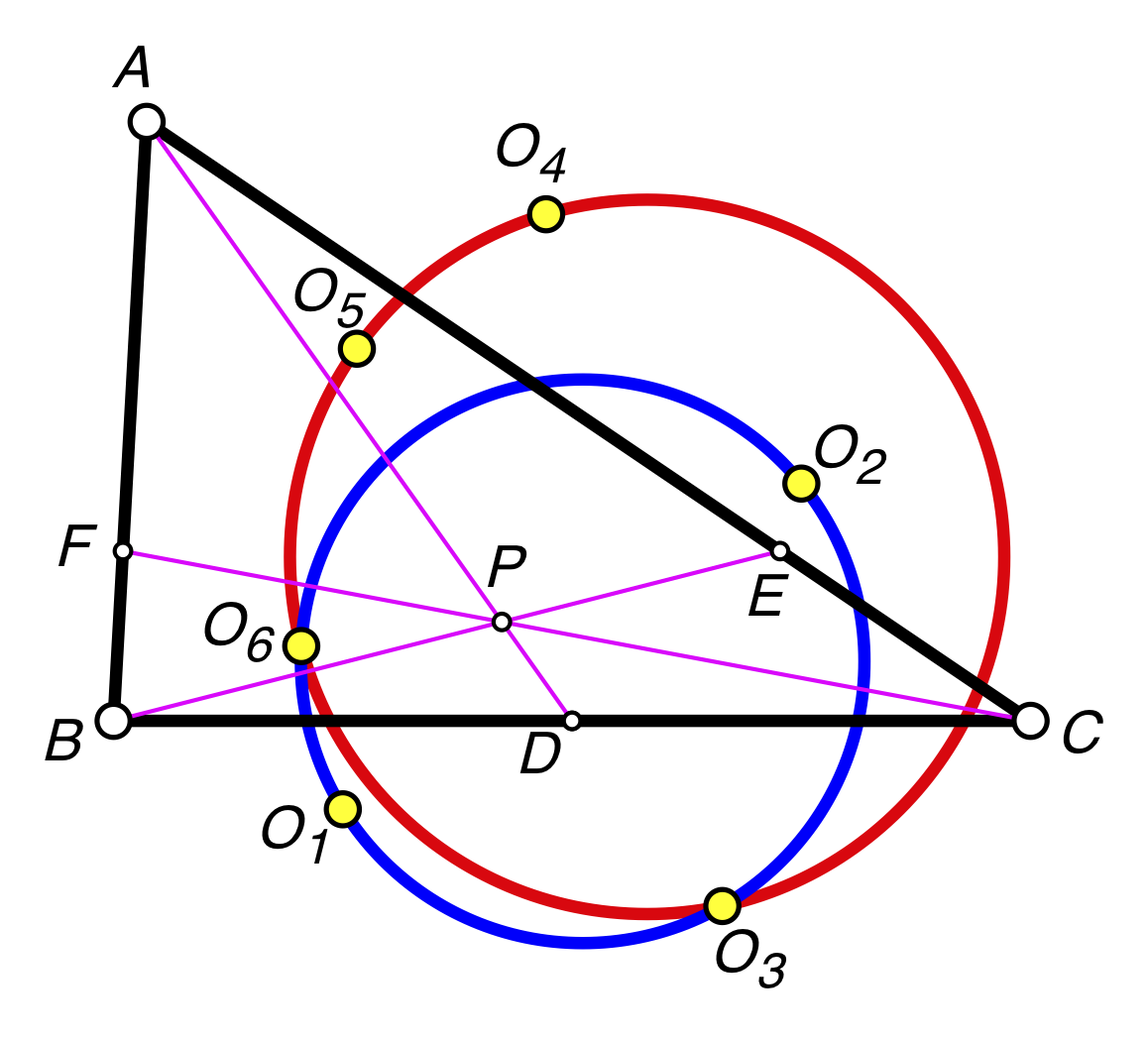}
\caption{The situation when $AD$ is a median}
\label{fig:circles-median}
\end{figure}

\begin{proof}
Part (a). Since $O_1$ is the circumcenter of $\triangle BPD$, $O_1$ must lie on the perpendicular bisector of $PB$. Similarly, $O_6$ lies on the perpendicular bisector of $BP$.
In the same way, $O_1O_2\perp PD$ and $O_2O_3\perp PC$.
Draw the circumcircles of triangles $PEC$ and $PFB$.
These circles meet at $P$ and $Q$ (Figure \ref{fig:median-proof}).
The line joining the centers of two intersecting circles is perpendicular to the common chord,
so $O_3O_6\perp PQ$.
Since $\angle PNO_2=\angle PKO_2$, quadrilateral $PNKO_2$ is cyclic
and hence $\angle NPK=\angle NO_2K$.
By the same reasoning, quadrilateral $PLMO_6$ is cyclic and so
$\angle LPM=\angle LO_6M$.
But by Lemma \ref{lemma:symmedian}, $\angle NPK=\angle LPM$.
Therefore $\angle NO_2K=\angle LO_6M$.
This makes quadrilateral $O_6O_1O_3O_2$ cyclic and
$O_1$, $O_2$, $O_3$, and $O_6$ lie on a circle as claimed.
\end{proof}

\begin{figure}[h!t]
\centering
\includegraphics[width=0.6\linewidth]{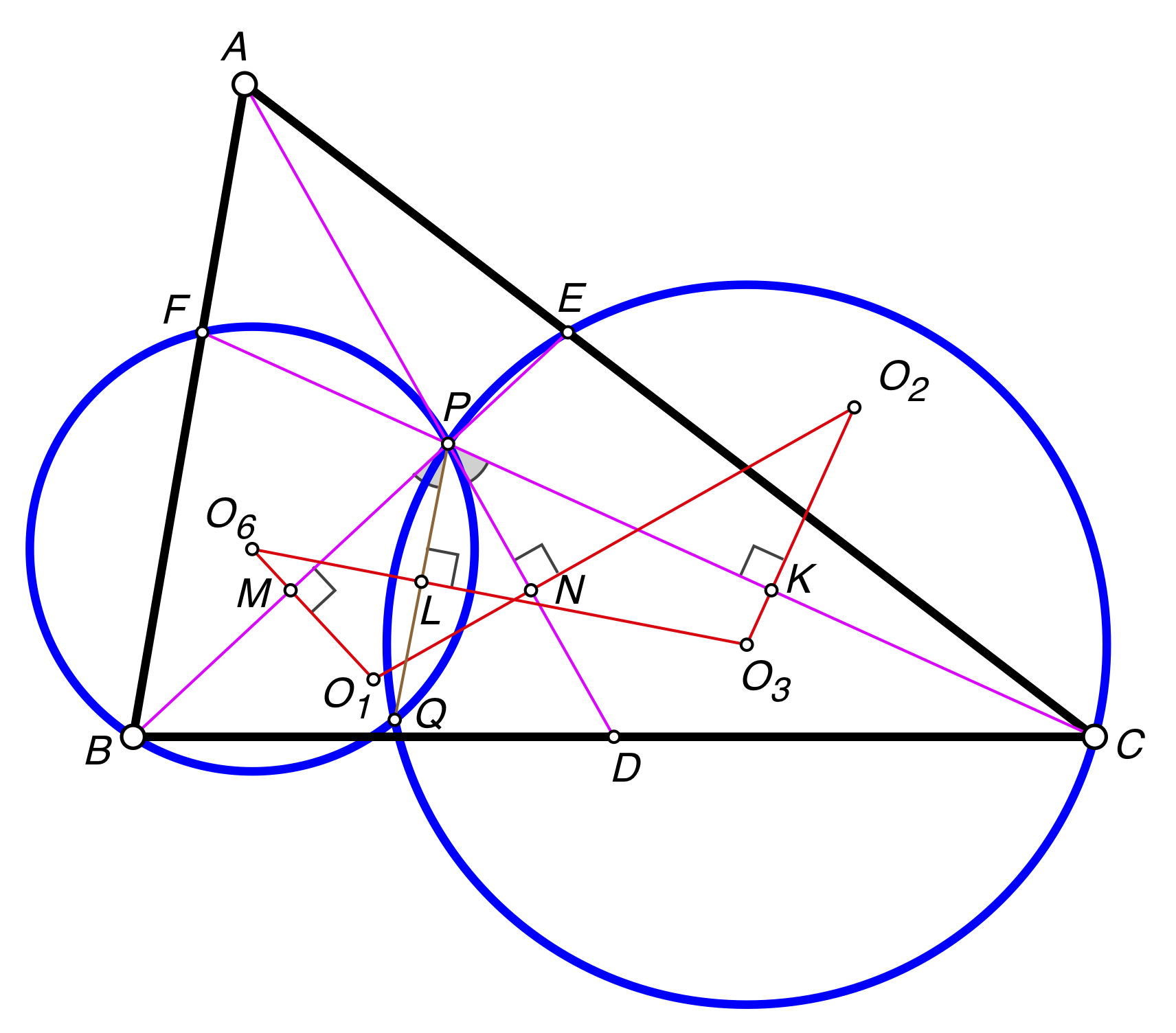}
\caption{}
\label{fig:median-proof}
\end{figure}

Part (b). This result is due to Suppa \cite{Suppa}. The proof is similar to the proof of part~(a) and details can be found in \cite{Pavlakis}.

\bigskip


\begin{thebibliography}{9}

\bibitem{Altshiller-Court}
Nathan Altshiller-Court, \textit{College Geometry}, 2nd edition, Dover Publications, New York, 2007.

\bibitem{Dubrovsky}
Vladimir Dubrovsky, \textit{Solution to Problem 3425}. Romantics of Geometry, August, 2019.\\
\url{https://www.facebook.com/groups/parmenides52/permalink/2384338005013225/}

\bibitem{Kantrowich}
Mark Kantrowich, \textit{Solution to Problem from the 1984 Annual Greek High School Competition}.
Crux Mathematicorum, \textbf{11}(1985)184.\\
\url{https://cms.math.ca/crux/backfile/Crux_v11n06_Jun.pdf}

\bibitem{Lamoen}
Floor van Lamoen, \textit{Problem 10830}, American Mathematical Monthly,
\textbf{2000}(107)863, \textbf{2002}(109)396--397.

\bibitem{Myakishev}
Alexei Myakishev and Peter Y. Woo, \textit{On the circumcenters of cevasix configurations}, Forum Geometricorum, \textbf{3}(2003)57--63.\\
\url{http://forumgeom.fau.edu/FG2003volume3/FG200305.pdf}

\bibitem{Pavlakis}Aris Pavl\'akis, \textit{Comment on Problem 3473}. Romantics of Geometry, Sept. 2019.\\
\url{https://www.facebook.com/groups/parmenides52/permalink/2410533599060332/}

\bibitem{Rabinowitz-ex}
Stanley Rabinowitz, \textit{Relationships Between Six Excircles}. Sangaku Journal of Mathematics, \textbf{3}(2019)73-90.
\url{http://www.sangaku-journal.eu/2019/SJM_2019_73-90_Rabinowitz.pdf}

\bibitem{Rabinowitz-in}
Stanley Rabinowitz, \textit{Relationships Between Six Incircles}. Sangaku Journal of Mathematics, \textbf{3}(2019)51-66.
\url{http://www.sangaku-journal.eu/2019/SJM_2019_51-66_Rabinowitz.pdf}

\bibitem{Suppa}Ercole Suppa, \textit{Problem 3473}. Romantics of Geometry, Aug. 2019.\\
\url{https://www.facebook.com/groups/parmenides52/permalink/2410533599060332/}

\end{thebibliography}
\end{document}